\newcommand{\be}{\begin{equation}}
\newcommand{\ee}{\end{equation}}
\newcommand{\bee}{\begin{equation*}}
\newcommand{\eee}{\end{equation*}}
\newtheorem{theorem}{Theorem}
\newtheorem{definition}[theorem]{Definition}
\title{On graph Laplacian eigenvectors with components in $\{-1,0,1\}$} 
\author{J. G. Caputo \fnref{fn1}}
\ead{caputo@insa-rouen.fr}
\author{I. Khames \fnref{fn2}}
\ead{imene.khames@insa-rouen.fr}
\author{A. Knippel \fnref{fn3}}
\ead{arnaud.knippel@insa-rouen.fr}
\address{Laboratoire de Mathématiques, INSA Rouen Normandie. 76800 Saint-Etienne du Rouvray, FRANCE.}
\begin{document}

\begin{abstract}

We characterize all graphs for which there are eigenvectors of the
graph Laplacian having all their components 
in $\{-1,+1 \}$ or $\{-1,0,+ 1 \}$. Graphs having eigenvectors 
with components
in $\{-1,+1 \}$ are called bivalent and are shown to be 
the regular bipartite graphs and their extensions obtained by 
adding edges between vertices with the same value for the 
given eigenvector.
Graphs with eigenvectors with components in $\{-1,0,+ 1 \}$ are
called trivalent and are shown to be soft-regular graphs --graphs
such that vertices associated with non-zero components have the same
degree -- and their extensions via some transformations.

\end{abstract}

\maketitle

\section{Introduction}

The graph wave equation \cite{cks13}, 
where the Laplacian is replaced by the graph Laplacian \cite{cvetkovic},
is a natural model for describing miscible flows on a network since
it arises from conservation laws. The graph wave equation is well understood in terms of normal modes
i.e. periodic solutions associated with the eigenvectors of the graph Laplacian.
In a previous work \cite{ckkp17},
we considered a nonlinear graph wave equation with a cubic on-site nonlinearity 
which is the discrete $\phi^4$ model \cite{scott}
and we studied the extension of the normal modes into nonlinear periodic orbits.

We generalized the criterion of Aoki \cite{aoki} for paths and cycles to the case of general graphs
and showed that the linear normal modes associated with eigenvectors 
composed from $\{-1,0,1\}$ extend to nonlinear periodic orbits.
We defined monovalent, bivalent and trivalent eigenvectors 
depending whether their components are in 
$\{+1\}$ or $\{-1,+1\}$ or $\{-1,0,+1\}$. The first case is trivial 
as the all-1 vector is always an eigenvector of the graph 
Laplacian, associated with the eigenvalue 0.

The trivalent eigenvectors contain components of value 0, corresponding 
to vertices that we call soft nodes to emphasize their special 
role in the dynamical systems, as analyzed in \cite{cks13}.  
A classification of graphs whose Laplacian matrices have 
eigenvectors with soft nodes is presented in \cite{ck16}.

In \cite{ckkp17}, we classified the bivalent and trivalent eigenvectors in paths and cycles 
for which the spectrum is well-known \cite{edwards}. It is then natural to try and characterize the graphs having bivalent and trivalent eigenvectors. \\

Wilf \cite{wilf} asked 
what kind of graph admits an adjacency matrix eigenvector
consisting solely of $\pm 1$ entries.
More recently, Stevanovi\'c \cite{stevanovic} proved that Wilf's problem is 
NP-complete,
and that the set of graphs having a $\pm 1$ eigenvector of 
the adjacency matrix is quite rich.

We ask here the same question in the case of the Laplacian matrix 
of a graph, and give a characterization of graphs having Laplacian 
eigenvectors with components in $\{-1, 1\}$ or $\{-1, 0, 1\}$. We 
call these graphs respectively bivalent and trivalent. This is done 
using transformations of graphs, three from the literature 
\cite{merris98} and one of our own.
In the case of regular graphs, all results about the Laplacian 
spectrum of graphs carry over to results about the adjacency spectrum. \\

The article is organized as follows. In section 2,
we introduce some preliminaries of the graph Laplacian
and transformations of graphs.
Section 3 presents a characterization of bivalent graphs : we show that the bivalent graphs are the regular bipartite graphs 
and their extensions by 
adding edges between two equal-valued vertices.
Section 4 presents a similar characterization for trivalent graphs : we show that the trivalent graphs are obtained from what we call soft regular graphs by applying some
transformations.

\section{Graph Laplacian}
Let $\mathcal{G}(\mathcal{V},\mathcal{E})$ be a graph with 
vertex set $\mathcal{V}$ of cardinality $N$
and edge set $\mathcal{E}$.
All graphs in this article are finite and undirected with no loops or multiple edges.
Denote the degree of vertex $j$ by $d_j$ and let 
$D$ be the $N\times N$ diagonal matrix of vertex degrees $D_{jj}=d_j$.
We will indicate adjacency of vertices by $i\sim j$ for $e_{ij} \in \mathcal{E}(\mathcal{G})$.
Let $A$ be the $N\times N ~ \left\{0,1 \right\}$ adjacency matrix such that $A_{ij}=1$ if and only if 
$e_{ij} \in \mathcal{E}(\mathcal{G})$ ($i\neq j$).
The Laplacian matrix \cite{cvetkovic} associated with the graph 
$\mathcal{G}$ is
the matrix $\Delta=D-A$. 
For an extensive survey on the Laplacian matrix see Merris \cite{merris94}.

Since the graph Laplacian $\Delta$ is a real symmetric positive 
semi-definite matrix, it is diagonalizable, say
\begin{equation}
\label{eigenvectors}
\Delta v^{k}=\lambda_k v^{k},
\end{equation}
where the eigenvectors $v^{k},~k\in \{1,\dots,N\}$ of $\Delta$ can be chosen 
to be orthogonal with respect to the scalar product in $\mathbb{R}^{N}$.
We arrange the eigenvalues $\lambda_k$ of $\Delta$ as
$\lambda_1=0 \leq  \lambda_2 \leq \cdots \leq \lambda_N$.
The first eigenvalue $\lambda_1=0$ corresponds to the monovalent eigenvector $v^1 = (1,1, \dots , 1 )^T$.

We refer to the Laplacian $\Delta$ of the graph $\mathcal{G}$ as $\Delta(\mathcal{G})$.
Thus, $v$ is an eigenvector of $\Delta(\mathcal{G})$ affording $\lambda$ if and only if 
\begin{equation}
\label{eigenvector}
(d_i-\lambda)v_i=\sum_{j\sim i}v_j, ~~~ \forall i\in \{1,\dots,N\} .
\end{equation}

\subsection{Definitions}

\begin{definition}[Soft node \cite{cks13}]
A node $j$ of a graph is a soft node for an eigenvalue 
$\lambda$ of the graph Laplacian if there exists an eigenvector 
$v$ for this eigenvalue such that $v_j = 0$.
\end{definition}

\begin{definition}[Regular graph]
A graph is $d$-regular if every vertex has the same degree $d$.
\end{definition}

\begin{definition}[Soft regular graph]
A graph is $d$-soft regular for an eigenvector $v$ of the Laplacian
if every non-soft node for $v$ has the same degree $d$.
\end{definition}

The graph on the left of Fig.\ref{soft} is $3$-soft regular 
for the eigenvector \\
$(0,1,1,0,-1,-1)^T$ since all the non-zero vertices 
have the same degree 3.
The graph on the right of Fig.\ref{soft} is non-soft regular for
the eigenvector \\
$(0,1,1,0,-1,-1,0,0)^T$ since the non-zero vertices
have different degrees.

\begin{figure}[H]
\centerline{\resizebox{10 cm}{5 cm}{\includegraphics{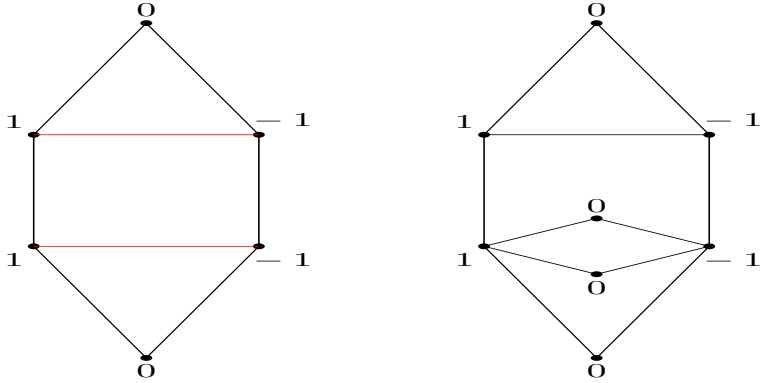}}}
\caption{\label{soft} $3$-soft regular graph for the Laplacian eigenvector $\left(0,1,1,0,-1,-1 \right)^T$ (left). 
Non-soft regular graph for the Laplacian eigenvector $\left(0,1,1,0,-1,-1,0,0 \right)^T$ (right).}
\end{figure}

\begin{definition}[Bivalent graph]
A graph is bivalent if there exists an eigenvector of the graph Laplacian 
composed from $-1,+1$.
Such a vector is called bivalent.
\end{definition}

The bivalent eigenvector $v$ must have as many $-1$ and $+1$ components,
and thus the bivalent graph must have an even number of nodes.
This is a consequence of the orthogonality of $v$ to the monovalent eigenvector $v^1$.

\begin{definition}[Trivalent graph]
A graph is trivalent if there exists an eigenvector of the graph Laplacian 
composed from $-1,0,+1$.
Such a vector is called trivalent.
\end{definition}

\begin{definition}[$k$-partite graph]
A $k$-partite graph is a graph whose vertices can be partitioned into $k$ different independent sets
so that no two vertices within the same set are adjacent.
\end{definition}

When $k = 2$ these are the bipartite graphs, and when $k = 3$ they are 
the tripartite graphs.

\begin{definition}[Perfect matching]
A perfect matching of a graph $\mathcal{G}$ is a matching (i.e., an independent edge set) 
in which every vertex of the graph is incident to exactly one edge of the matching. 
\end{definition}

\begin{definition}[Alternate perfect matching]
An alternate perfect matching for a vector $v$ on the nodes of a graph $\mathcal{G}$
is a perfect matching for the nonzero nodes such that edges $e_{ij}$ of the matching
satisfy $v_i=-v_j ~~ ( \neq 0)$.
\end{definition}

The left of Fig.\ref{soft} shows 
the alternate perfect matching (represented by red lines) for the eigenvector $(0,-1,-1,0,1,1)^T$ on the nodes of the 6-cycle.

\subsection{Transformations of graphs}
\label{principe}
Merris \cite{merris98} considers several transformations of graphs based on Laplacian eigenvectors.
In the following we review three of them and we present another transformation.

\subsubsection{Transformations preserving eigenvalues}
\begin{theorem}[\textbf{Link between two equal nodes} \cite{merris98}] 
\label{L2S}
Let $v$ be an eigenvector of $\Delta(\mathcal{G})$ affording an eigenvalue $\lambda$.
If $v_i=v_j$, then $v$ is an eigenvector of $\Delta(\mathcal{G}')$ affording the eigenvalue $\lambda$,
where $\mathcal{G}'$ is the graph obtained from $\mathcal{G}$ by deleting or adding the edge $e_{ij}$
depending whether $e_{ij}$ is an edge of $\mathcal{G}$ or not.
\end{theorem}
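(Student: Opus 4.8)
The plan is to verify the eigenvector condition \eqref{eigenvector} directly in $\mathcal{G}'$, using as input that it holds in $\mathcal{G}$ together with the hypothesis $v_i=v_j$. The first observation I would make is that passing from $\mathcal{G}$ to $\mathcal{G}'$ alters only the single edge $e_{ij}$: for every vertex $\ell\notin\{i,j\}$ the degree $d_\ell$ and the neighbour set $\{k:k\sim\ell\}$ are the same in both graphs, so equation \eqref{eigenvector} at such an $\ell$ is literally the same identity in $\mathcal{G}$ and in $\mathcal{G}'$ and holds automatically. It therefore suffices to check \eqref{eigenvector} at the two vertices $i$ and $j$.

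Next I would split into the two cases. If $e_{ij}\notin\mathcal{E}(\mathcal{G})$, so that $\mathcal{G}'$ is obtained by adding $e_{ij}$, then in $\mathcal{G}'$ the degree of $i$ is $d_i+1$ and the neighbours of $i$ are those in $\mathcal{G}$ together with $j$. The left-hand side of \eqref{eigenvector} at $i$ in $\mathcal{G}'$ is $(d_i+1-\lambda)v_i=(d_i-\lambda)v_i+v_i$, which by \eqref{eigenvector} in $\mathcal{G}$ equals $\bigl(\sum_{k\sim i}v_k\bigr)+v_i$, the sum being over the neighbours of $i$ in $\mathcal{G}$; the right-hand side of \eqref{eigenvector} at $i$ in $\mathcal{G}'$ is $\bigl(\sum_{k\sim i}v_k\bigr)+v_j$. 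These two quantities coincide exactly because $v_i=v_j$. The equation at $j$ is the same computation with the roles of $i$ and $j$ interchanged. The deletion case, $e_{ij}\in\mathcal{E}(\mathcal{G})$, is identical with $+1$ replaced by $-1$ and $+v_j$ by $-v_j$ throughout. Since $v$ is nonzero (being an eigenvector of $\Delta(\mathcal{G})$), it is a genuine eigenvector of $\Delta(\mathcal{G}')$ for $\lambda$.

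I do not expect any real obstacle here: the statement is a local perturbation argument and the only point requiring care is the sign bookkeeping that distinguishes the addition case from the deletion case, plus the remark that the equations at all vertices other than $i$ and $j$ are untouched. I would close by noting that the very same vector $v$ works in $\mathcal{G}'$ — no renormalisation is needed — so the theorem can be iterated to add or delete any set of edges joining vertices that carry equal values of $v$ without changing either $\lambda$ or $v$, a fact the later sections will use to build the extensions of bivalent and soft regular graphs.
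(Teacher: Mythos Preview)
Your argument is correct: the local verification of \eqref{eigenvector} at the two endpoints, together with the observation that all other vertices are unaffected, is exactly the right approach, and your bookkeeping in both the addition and deletion cases is accurate. Note, however, that the paper does not supply its own proof of this theorem --- it is quoted from Merris \cite{merris98} --- so there is nothing to compare against; your write-up is in fact the standard one-line perturbation argument that underlies the result.
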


Fig.\ref{figL2S} shows how Theorem \ref{L2S}
can be used to extend an eigenvector and its eigenvalue to the transformed graphs
by adding edges (represented by red lines) between nodes having the same value.
Notice that this transformation does not preserve the regularity of the graph.

\begin{figure}[H]
\centerline{\resizebox{12 cm}{3 cm}{\includegraphics{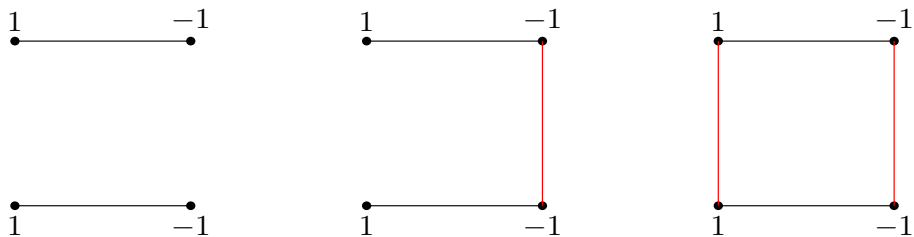}}}
\caption{\label{figL2S} Three graphs obtained by adding or deleting edges between equal nodes,
affording (the same eigenvalue) $\lambda=2$.}
\end{figure}

\begin{theorem}[\textbf{Extension of soft nodes} \cite{merris98}]
\label{extension}
For a graph $\mathcal{G}(\mathcal{V},\mathcal{E})$ fix a nonempty 
subset $\mathcal{W}$ of $\mathcal{V}$. Let $\mathcal{G}(\mathcal{W})$
be the graph obtained by 
removing all the vertices in $\mathcal{V} \backslash \mathcal{W}$ 
that are adjacent in $\mathcal{G}$
to no vertex of $\mathcal{W}$ and
any remaining edge that is incident with no vertex of $\mathcal{W}$.
Suppose $v$ is an eigenvector of the Laplacian of the reduced graph $\mathcal{G}\{\mathcal{W}\}$
that affords $\lambda$ and is supported by $\mathcal{W}$ in the sense that if
$v_i \neq 0$, then $i\in \mathcal{W}$.
Then the extension $v'$ with $v'_j=v_j$ for $j\in \mathcal{W}$ and $v'_j=0$ otherwise
is an eigenvector of $\Delta(\mathcal{G})$ affording $\lambda$.
\end{theorem}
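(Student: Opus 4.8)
The plan is to verify directly the eigenvector characterization \eqref{eigenvector} for $v'$ on $\mathcal{G}$, deducing it from the same identity for $v$ on the reduced graph $\mathcal{G}\{\mathcal{W}\}$. First I would fix notation by splitting $\mathcal{V}$ into three disjoint pieces: the prescribed set $\mathcal{W}$; the set $\mathcal{U}$ of vertices of $\mathcal{V}\setminus\mathcal{W}$ having at least one neighbour in $\mathcal{W}$; and the set $\mathcal{R}$ of vertices of $\mathcal{V}\setminus\mathcal{W}$ with no neighbour in $\mathcal{W}$. By construction $\mathcal{G}\{\mathcal{W}\}$ has vertex set $\mathcal{W}\cup\mathcal{U}$ and its edges are exactly the edges of $\mathcal{G}$ incident with $\mathcal{W}$; in particular all edges inside $\mathcal{U}$ and all edges touching $\mathcal{R}$ are discarded.

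The key preliminary observation is that passing to $\mathcal{G}\{\mathcal{W}\}$ leaves the degree of every vertex of $\mathcal{W}$ unchanged: if $i\in\mathcal{W}$ then every edge of $\mathcal{G}$ at $i$ is incident with $\mathcal{W}$ and hence retained, and every neighbour of $i$ is either in $\mathcal{W}$ or, being adjacent to $i\in\mathcal{W}$, lies in $\mathcal{U}$, so no neighbour of $i$ is deleted. Thus $\deg_{\mathcal{G}\{\mathcal{W}\}}(i)=d_i$ for all $i\in\mathcal{W}$. Moreover, since $v$ is supported on $\mathcal{W}$ we have $v_j=0$ for $j\in\mathcal{U}$, while $v'_j=v_j$ on $\mathcal{W}$ and $v'_j=0$ for every $j\notin\mathcal{W}$.

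Then I would check \eqref{eigenvector} case by case. For $i\in\mathcal{W}$: the neighbours of $i$ in $\mathcal{G}$ coincide with those in $\mathcal{G}\{\mathcal{W}\}$, and among them only those in $\mathcal{W}$ contribute to the sum, so $\sum_{j\sim i}v'_j=\sum_{j\sim_{\mathcal{G}\{\mathcal{W}\}} i}v_j=(\deg_{\mathcal{G}\{\mathcal{W}\}}(i)-\lambda)v_i=(d_i-\lambda)v'_i$, using the eigenvector equation for $v$ and the degree identity. For $i\in\mathcal{U}$: here $v'_i=0$ and $v_i=0$, and the neighbours of $i$ in $\mathcal{G}\{\mathcal{W}\}$ are exactly its neighbours in $\mathcal{W}$, so $\sum_{j\sim i}v'_j=\sum_{j\sim_{\mathcal{G}\{\mathcal{W}\}} i}v_j=(\deg_{\mathcal{G}\{\mathcal{W}\}}(i)-\lambda)\cdot 0=0=(d_i-\lambda)v'_i$. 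For $i\in\mathcal{R}$: $v'_i=0$ and no neighbour of $i$ lies in $\mathcal{W}$, so every neighbour $j$ has $v'_j=0$ and both sides of \eqref{eigenvector} vanish. Finally $v'\neq 0$ since $v\neq 0$ is supported on $\mathcal{W}$ and $v'$ restricts to $v$ there, so $v'$ is a genuine eigenvector of $\Delta(\mathcal{G})$ affording $\lambda$.

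I do not expect a deep obstacle here: the argument is entirely a partition-and-count verification. The one place that needs real care is the degree-preservation claim for vertices of $\mathcal{W}$, together with keeping straight exactly which edges of $\mathcal{G}$ survive in $\mathcal{G}\{\mathcal{W}\}$ — once the three-way split $\mathcal{V}=\mathcal{W}\cup\mathcal{U}\cup\mathcal{R}$ is in place, every case collapses cleanly onto the eigenvector equation for $v$.
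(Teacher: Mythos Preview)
Your proof is correct: the three-way partition $\mathcal{V}=\mathcal{W}\cup\mathcal{U}\cup\mathcal{R}$ together with the degree-preservation observation for vertices of $\mathcal{W}$ reduces the verification of \eqref{eigenvector} for $v'$ on $\mathcal{G}$ to the eigenvector identity for $v$ on $\mathcal{G}\{\mathcal{W}\}$, and each case is handled cleanly. Note, however, that the paper does not supply its own proof of this theorem --- it is quoted from Merris \cite{merris98} without argument --- so there is nothing in the paper to compare your approach against; your direct case-by-case verification is in any event the natural (and standard) route.
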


\begin{figure}[H]
\centerline{\resizebox{8 cm}{3 cm}{\includegraphics{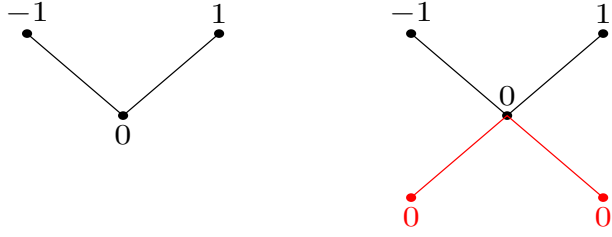}}}
\caption{\label{articulation} Extension at soft node of the eigenvector $\left(-1,0,1\right)^T$
by adding soft nodes. The eigenvectors afford (the same eigenvalue) $\lambda=1$.}
\end{figure}

We introduce the following transformation 
which preserves the eigenvalues and does not preserve the soft regularity of the graph.

\begin{theorem}[\textbf{Replace an edge by a soft square}]
\label{square}
Let $v$ be an eigenvector of $\Delta(\mathcal{G})$ affording an eigenvalue $\lambda$.
Let $\mathcal{G}'$ be the graph obtained from $\mathcal{G}$
by deleting an edge $e_{ij} \in \mathcal{E}(\mathcal{G})$ such that $v_i=-v_j$
and adding two soft nodes $k,l \in \mathcal{V}(\mathcal{G}')$ 
for the extension $v'$ of $v$ 
(such that $v'_m=v_m$ for $m\in \mathcal{V}(\mathcal{G})$ 
and $v'_k=v'_l=0$)
and adding four edges $e_{ik},e_{kj},e_{il},e_{lj} \in \mathcal{E}(\mathcal{G'})$. 
Then, $v'$ is an eigenvector of $\Delta(\mathcal{G}')$ for the eigenvalue $\lambda$.
\end{theorem}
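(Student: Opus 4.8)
The plan is to verify the eigenvector equation \eqref{eigenvector} at every vertex of $\mathcal{G}'$, splitting into three cases: the two new soft nodes $k,l$, the two endpoints $i,j$ of the deleted edge, and all other vertices. For a vertex $m \notin \{i,j,k,l\}$, the neighbourhood of $m$ in $\mathcal{G}'$ is identical to its neighbourhood in $\mathcal{G}$, and $v'$ agrees with $v$ on all of these, so the equation $(d_m - \lambda)v'_m = \sum_{n \sim m} v'_n$ holds in $\mathcal{G}'$ exactly because it held in $\mathcal{G}$. This is the routine part.

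Next I would check the two soft nodes. Take $k$ (the argument for $l$ is identical). In $\mathcal{G}'$, vertex $k$ has degree $2$, being adjacent precisely to $i$ and $j$, so the left-hand side of \eqref{eigenvector} is $(2-\lambda)\cdot 0 = 0$, while the right-hand side is $v'_i + v'_j = v_i + v_j = 0$ by the hypothesis $v_i = -v_j$. So the equation holds at $k$ and at $l$.

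The slightly more delicate case is the two endpoints $i$ and $j$. Consider $i$: its degree changes from $d_i$ in $\mathcal{G}$ to $d_i' = d_i - 1 + 2 = d_i + 1$ in $\mathcal{G}'$, since we remove the edge $e_{ij}$ and add the two edges $e_{ik}, e_{il}$. Its neighbour set changes from $N_{\mathcal G}(i)$ to $\bigl(N_{\mathcal G}(i)\setminus\{j\}\bigr)\cup\{k,l\}$. Hence in $\mathcal{G}'$,
\begin{equation*}
\sum_{n \sim i}v'_n = \Bigl(\sum_{n\sim_{\mathcal G} i} v_n\Bigr) - v_j + v'_k + v'_l = (d_i-\lambda)v_i - v_j + 0 + 0.
\end{equation*}
Using $v_j = -v_i$ this equals $(d_i - \lambda)v_i + v_i = (d_i + 1 - \lambda)v_i = (d_i' - \lambda)v'_i$, which is exactly \eqref{eigenvector} at $i$ in $\mathcal{G}'$. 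The computation at $j$ is symmetric, using $v_i = -v_j$. Since \eqref{eigenvector} now holds at every vertex of $\mathcal{G}'$, $v'$ is an eigenvector of $\Delta(\mathcal{G}')$ affording $\lambda$.

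I do not anticipate a genuine obstacle here; the only point requiring care is bookkeeping of the degree change at $i$ and $j$ (the deletion of one edge and addition of two must be tracked so that the $+1$ shift in degree precisely absorbs the $-v_j = +v_i$ contribution), and checking implicitly that $k,l$ are genuinely new vertices so that no multiple edges are created and the stated degrees are correct. One should also note in passing that $e_{ij}$ being an edge with $v_i = -v_j \neq 0$ forces $v_i, v_j \neq 0$, so $i$ and $j$ are non-soft, which is consistent with the construction; if one wanted $v'$ to remain $\{-1,0,1\}$-valued one simply observes that no component value is altered.
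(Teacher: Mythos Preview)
Your proof is correct and follows the same approach as the paper: verify the eigenvector relation \eqref{eigenvector} vertex by vertex, with the key computation being the $+1$ degree shift at $i$ and $j$ absorbing the lost $-v_j$ (resp.\ $-v_i$) contribution. In fact you are slightly more careful than the paper, which handles $i$ and $j$ but lumps the new soft nodes $k,l$ in with ``the other vertices'' without explicitly checking the balance $v'_i+v'_j=0$ there.
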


\begin{proof}
Suppose the edge $e_{ij} \in \mathcal{E}(\mathcal{G})$ joining two nodes having opposite values $v_i=-v_j$,
is replaced by a square 
$e_{ik},e_{kj},e_{il},e_{lj} \in \mathcal{E}(\mathcal{G}')$
of soft nodes $k,l \in \mathcal{V}(\mathcal{G'})$.
The eigenvector condition
$$\left((d_i+1)-\lambda\right)v_i=v_i+\sum_{ m\sim i} v_m =\sum_{ m\sim i,~m\neq j} v_m=2\times 0+\sum_{ m\sim i,~m\neq j} v_m,$$
is the condition that must be met at vertex $i$ 
for the extension $v'$ of $v$, by defining $v'_m=0$ for $m\in \mathcal{V}(\mathcal{G}')\backslash
\mathcal{V}(\mathcal{G})$, to be an eigenvector of $\Delta(\mathcal{G}')$ affording $\lambda$.
The eigenvector condition at vertex $j$ is confirmed similarly, 
and the conditions at the other vertices are the same for $\mathcal{G}'$ as they are for $\mathcal{G}$.
\end{proof}

Fig.\ref{square_fig} shows how Theorem \ref{square}
can be used to transform a soft regular graph to a non-soft regular graph
without changing the eigenvalue.
Note that a square of soft nodes can be replaced by an edge between opposite nodes.

\begin{figure}[H]
\centerline{\resizebox{8 cm}{5 cm}{\includegraphics{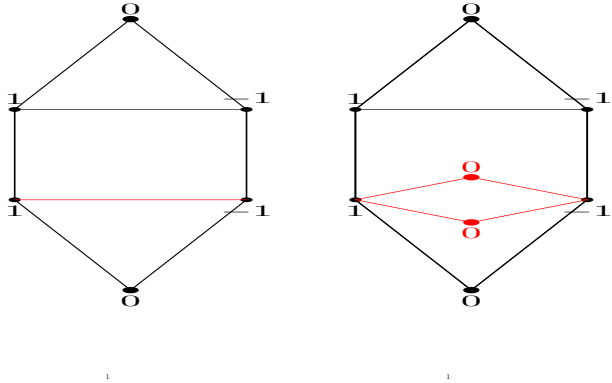}}}
\caption{\label{square_fig} Replacing an edge between opposite nodes by a square of soft nodes.
The eigenvectors afford (the same eigenvalue) $\lambda=3$.}
\end{figure}

\subsubsection{Transformations changing eigenvalues}
The following transformation allows us to extend graphs
by changing the eigenvalues
and preserving the soft regularity of the graph.

\begin{theorem}[\textbf{Add/Delete an alternate perfect matching} \cite{merris98}] 
\label{alt}
Let $v$ be an eigenvector of $\Delta(\mathcal{G})$ affording an eigenvalue $\lambda$.
Let $\mathcal{G}'$ be the graph obtained from $\mathcal{G}$
by adding (resp. deleting) an alternate perfect matching for $v$.
Then, $v$ is an eigenvector of $\Delta(\mathcal{G}')$ affording the eigenvalue $\lambda +2$ (resp. $\lambda -2$).
\end{theorem}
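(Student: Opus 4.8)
The plan is to verify the eigenvector equation \eqref{eigenvector} vertex by vertex on $\mathcal{G}'$, splitting the vertices into three classes: the endpoints of the added/deleted matching edges, and everything else. For a vertex $i$ not covered by the alternate perfect matching $M$ — that is, a soft node, $v_i=0$ — its neighborhood and degree are unchanged in $\mathcal{G}'$, so the equation $(d_i-\lambda)v_i=\sum_{j\sim i}v_j$ survives verbatim with the same $\lambda$; but since $v_i=0$, both sides are also unaffected by replacing $\lambda$ with $\lambda\pm 2$, so the condition holds for the new eigenvalue too. That disposes of all soft nodes for free.

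Next I would treat a vertex $i$ that is matched in $M$, say to $j$ with $v_i=-v_j\neq 0$. In the "add" case its degree in $\mathcal{G}'$ is $d_i+1$ and its new neighbor contributes $v_j=-v_i$ to the right-hand side. So I compute
\begin{equation*}
\sum_{m\sim_{\mathcal{G}'} i} v_m = \Bigl(\sum_{m\sim_{\mathcal{G}} i} v_m\Bigr) + v_j = (d_i-\lambda)v_i - v_i = \bigl(d_i-(\lambda+2)\bigr)v_i + v_i.
\end{equation*}

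Wait — let me redo that more carefully: $(d_i+1)-(\lambda+2) = d_i - \lambda - 1$, and multiplying by $v_i$ gives $(d_i-\lambda)v_i - v_i = \sum_{m\sim_{\mathcal{G}}i}v_m - v_i$. Hmm, that is off by $2v_i$ from what I want, which is $\sum_{m\sim_{\mathcal{G}}i}v_m + v_j = \sum_{m\sim_{\mathcal{G}}i}v_m - v_i$. So in fact it matches exactly: $\bigl((d_i+1)-(\lambda+2)\bigr)v_i = \sum_{m\sim_{\mathcal{G}'}i}v_m$. Good. The deletion case is the same computation run backwards: removing the edge to $j$ drops the degree by $1$ and removes the term $v_j=-v_i$ from the sum, and one checks $\bigl((d_i-1)-(\lambda-2)\bigr)v_i = \sum_{m\sim_{\mathcal{G}}i}v_m - v_j$.

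Finally, since $M$ is a \emph{perfect} matching of the nonzero nodes, every nonzero vertex is either matched or zero-valued — there is no third case — so the verification above is exhaustive, and $v$ (unchanged as a vector) is an eigenvector of $\Delta(\mathcal{G}')$ affording $\lambda+2$ (resp. $\lambda-2$). I do not anticipate a genuine obstacle here; the only thing to be careful about is bookkeeping the sign convention $v_i=-v_j$ and making sure the degree shift $\pm 1$ and the eigenvalue shift $\pm 2$ combine correctly — the mismatch by a factor of two is exactly what forces the eigenvalue to move by $2$ rather than by $1$, and it is worth stating explicitly that this is why an alternate (sign-reversing) matching is needed rather than an arbitrary one.
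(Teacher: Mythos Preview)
Your argument is correct: the vertex-by-vertex verification of \eqref{eigenvector} is exactly the right approach, and your arithmetic checks out once you untangle your own second-guessing (the ``off by $2v_i$'' worry was a false alarm, as you noticed). Note, however, that the paper does not supply a proof of this theorem at all; it is quoted from Merris \cite{merris98} and stated without argument. Your write-up therefore fills a gap rather than duplicating anything, and the underlying computation is the standard one. If you intend to include it, I would recommend cleaning out the exploratory asides (``Wait'', ``Hmm'') and presenting the add and delete cases as two clean one-line identities.
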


Adding an alternate perfect matching is illustrated in Fig.\ref{alt_fig}.
This transformation preserves the soft regularity of the graph 
and increases the eigenvalue by 2.

\begin{figure}[H]
\centerline{\resizebox{13 cm}{4 cm}{\includegraphics{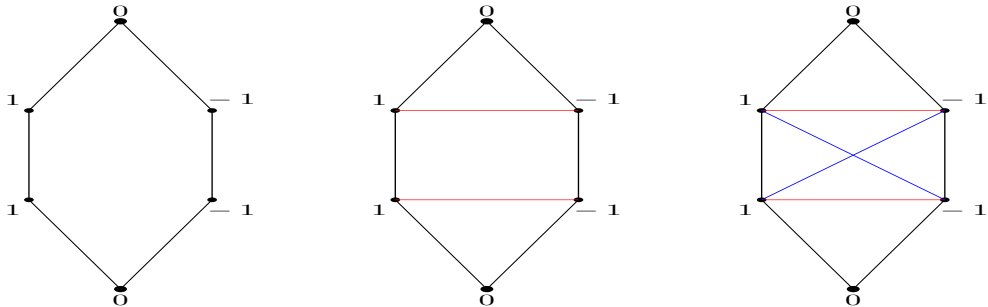}}}
\caption{\label{alt_fig}
Graphs obtained by adding an alternate perfect matching for the eigenvector
$\left(0,1,1,0,-1,-1 \right)^T$.
The eigenvalues are
$\lambda=1$ (left), $\lambda=3$ (middle) and $\lambda=5$ (right).}
\end{figure}

\section{Bivalent graphs}

For bivalent graphs, we give the following characterization.

\begin{theorem}[\textbf{Bivalent graphs}]
The bivalent graphs are the regular bipartite graphs and their extensions obtained
by adding edges between nodes having the same value for a bivalent eigenvector.
\end{theorem}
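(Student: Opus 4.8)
The plan is to prove the two inclusions separately. For the easy direction I would first verify directly that every $d$-regular bipartite graph $\mathcal{G}_0$ is bivalent: writing $V^{+},V^{-}$ for the two parts and $v$ for the vector equal to $+1$ on $V^{+}$ and $-1$ on $V^{-}$, every neighbour of a vertex $i$ lies in the opposite part, so $\sum_{j\sim i}v_j=-d\,v_i$ and hence $\Delta(\mathcal{G}_0)v=2d\,v$; thus $v$ is a bivalent eigenvector affording $\lambda=2d$. (A $d$-regular bipartite graph has $|V^{+}|=|V^{-}|$ by an edge count, so $v$ is automatically orthogonal to $v^1$.) Then Theorem \ref{L2S}, applied once for each added edge, shows that adjoining to $\mathcal{G}_0$ any collection of edges between vertices of equal $v$-value leaves $v$ an eigenvector affording the same $\lambda$, so every such extension is again bivalent.

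For the converse I would argue as follows. Let $v$ be a bivalent eigenvector of $\Delta(\mathcal{G})$ affording $\lambda$, and set $V^{\pm}=\{i:v_i=\pm1\}$. For each vertex $i$ let $s_i$ be the number of neighbours with the same $v$-value and $o_i$ the number with the opposite value, so that $d_i=s_i+o_i$ and $\sum_{j\sim i}v_j=(s_i-o_i)v_i$. Substituting into the eigenvector equation \eqref{eigenvector} and cancelling $v_i\neq0$ gives $d_i-\lambda=s_i-o_i$, i.e.\ $o_i=\lambda/2$ for \emph{every} vertex $i$. In particular $\lambda$ is a nonnegative even integer and each vertex has exactly $\lambda/2$ opposite-valued neighbours. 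This local identity is the heart of the argument.

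Now let $\mathcal{G}_0$ be obtained from $\mathcal{G}$ by deleting every edge joining two vertices of equal $v$-value. Every surviving edge runs between $V^{+}$ and $V^{-}$, so $\mathcal{G}_0$ is bipartite with parts $V^{+},V^{-}$; and at vertex $i$ exactly the $s_i$ incident equal-valued edges are removed, leaving degree $o_i=\lambda/2$, so $\mathcal{G}_0$ is $(\lambda/2)$-regular. Deleting those edges one at a time and applying Theorem \ref{L2S} at each step shows that $v$ remains an eigenvector of $\Delta(\mathcal{G}_0)$ affording $\lambda$, so $\mathcal{G}_0$ is a regular bipartite graph carrying the bivalent eigenvector $v$. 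Finally $\mathcal{G}$ is recovered from $\mathcal{G}_0$ by re-adding exactly the deleted edges, each of which joins two vertices having the same value of the bivalent eigenvector $v$; hence $\mathcal{G}$ is an extension of a regular bipartite graph of precisely the stated type.

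I expect no serious obstacle: once the counting identity $o_i=\lambda/2$ is established, the rest is bookkeeping plus repeated use of Theorem \ref{L2S}. The only points that need a little care are the degenerate situations — $\lambda=0$ (where $\mathcal{G}_0$ is edgeless), a disconnected or vertex-isolating $\mathcal{G}_0$, and the observation that ``regular bipartite'' forces balanced parts so that the $\pm1$ vector is genuinely orthogonal to $v^1$ — but none of these affects the statement.
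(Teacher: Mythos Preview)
Your proposal is correct and follows essentially the same approach as the paper: delete the edges between equal-valued vertices (via Theorem~\ref{L2S}) to obtain a bipartite graph, use the eigenvector relation to force regularity (the paper writes $\lambda=2d_j$ on the reduced graph, you write $o_i=\lambda/2$ on the original---these are the same computation), and handle the converse by the direct check on a $d$-regular bipartite graph plus Theorem~\ref{L2S}. Your version is somewhat more explicit about the counting and the degenerate cases, but there is no substantive difference in strategy.
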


\begin{proof}
Let $\mathcal{G}$ be a graph having a bivalent eigenvector $v$ affording $\lambda$.
We reduce $\mathcal{G}$ by deleting all 
the edges between equal nodes Theorem \ref{L2S},
thus obtaining a graph where edges only connect $+1$ to $-1$.
This is a bipartite graph.

We write the eigenvector condition 
for nodes $j$ (with degree $d_j$) such that $v_j=1$ 
\be
\label{biv}
(d_j)(1) + \sum_{i\sim j} (-1)(-1) = 2d_j=\lambda .
\ee
A similar equation holds for nodes $j$ such that $v_j=-1$.

The eigenvector condition for all vertices
of $\mathcal{G}$ requires that $\lambda=2d_j,~ \forall j \in \{1,\dots,N \}$
so that $d_j=d,~ \forall j \in \{1,\dots,N \}$. 
Thus, $\mathcal{G}$ is $d$-regular graph.
Hence, a bivalent graph is either a $d$-regular bipartite graph
or obtained from such a graph by 
adding edges between equal nodes Theorem \ref{L2S}.

Conversely, if $\mathcal{G}$ is a bipartite $d$-regular graph 
and $\mathcal{G}'$ is obtained from $\mathcal{G}$ by
adding edges between equal nodes then 
(\ref{biv}) is satisfied and $\mathcal{G}'$ is bivalent.
\end{proof}

As an example, Fig.\ref{bivalent} shows the smallest bivalent graph, with eigenvalue $\lambda=2$. It is a 1-regular graph.

\begin{figure}[H]
\centerline{\resizebox{2.8 cm}{0.5 cm}{\includegraphics{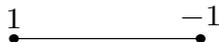}}}
\caption{\label{bivalent} A 1-regular bivalent graph ($d=1, ~~\lambda=2$).}
\end{figure}

The extension of two copies of chain of length 1 seen in Fig.\ref{bivalent}
by adding an alternate perfect matching Theorem \ref{alt}
produces the $2$-regular bivalent graph shown on the right of Fig.\ref{2reg}

\begin{figure}[H]
\centerline{\resizebox{8 cm}{3 cm}{\includegraphics{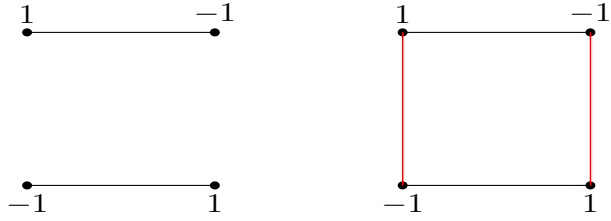}}}
\caption{\label{2reg} 
Construction of 2-regular bivalent graph $d=2, ~\lambda=4$ (right)
from the 1-regular bivalent graph $d=1,~ \lambda=2$ (left) 
by adding an alternate perfect matching.}
\end{figure}

The extension of three copies of chain of length 1 seen in Fig.\ref{bivalent} by 
adding an alternate perfect matching Theorem \ref{alt} (two times)
gives the $3$-regular bivalent graph shown on the right of Fig.\ref{3reg}

\begin{figure}[H]
\centerline{\resizebox{9 cm}{4 cm}{\includegraphics{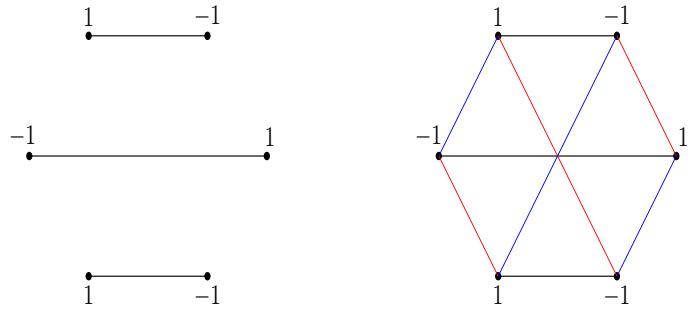}}}
\caption{\label{3reg}
Construction of 3-regular bivalent graph $d=3, ~\lambda=6$ (right)
from the 1-regular bivalent graph $d=1,~ \lambda=2$ (left) 
by adding two alternate perfect matchings.}
\end{figure}

Adding edges between equal nodes (Theorem \ref{L2S})
to three copies of a chain of length 1 seen in Fig.\ref{bivalent}
produces the bivalent eigenvector 
of the non-regular graphs shown in Fig.\ref{nonreg}
affording the same eigenvalue $\lambda=2$.

\begin{figure}[H]
\centerline{\resizebox{10 cm}{4 cm}{\includegraphics{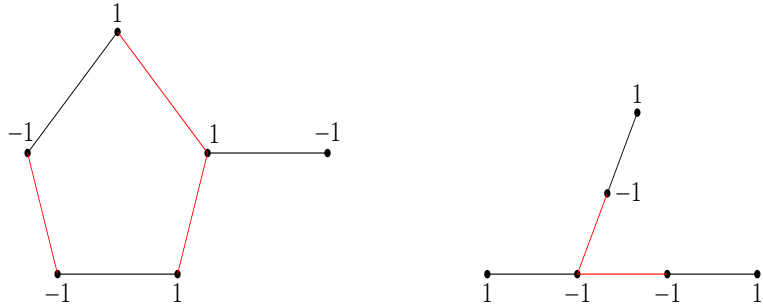}}}
\caption{\label{nonreg} Two bivalent graphs obtained from the 1-regular graph 
by adding edges between equal nodes, that afford (the same eigenvalue) $\lambda=2$.}
\end{figure}

More generally, note that a bivalent eigenvector affords an eigenvalue 
$\lambda\in \{0,2,4,\dots,2 d_{min} \}$ 
where $d_{min}$ is the smallest degree of nodes in the graph.

The following theorem was shown by Molitierno and Neumann \cite{molitierno03}.
We give here a different proof.

\begin{theorem}[Bivalent tree]
\label{tree}
A tree $\mathcal{T}$ is bivalent if and only if it has a perfect matching.
\end{theorem}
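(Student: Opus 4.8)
The plan is to read both implications off the characterisation of bivalent graphs proved above, specialised to a tree by exploiting that a tree on $N$ vertices has exactly $N-1$ edges.

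For the direct implication, assume $\mathcal{T}$ is bivalent with bivalent eigenvector $v$ affording $\lambda$. Since $\mathcal{T}$ is connected, $\lambda\neq 0$ (otherwise $v$ would be proportional to $v^1$ and hence constant, so not valued in $\{-1,+1\}$ with both signs occurring). Exactly as in the proof of the bivalent graphs theorem, I delete from $\mathcal{T}$ every edge joining two equal-valued vertices; by Theorem~\ref{L2S} the resulting spanning subgraph $\mathcal{G}_0$ still affords $v$ and $\lambda$, and now all its edges join a $+1$ to a $-1$. Evaluating the eigenvector equation~(\ref{biv}) at any vertex $j$ gives $\lambda=2d_j^{0}$, where $d_j^{0}$ is the degree of $j$ in $\mathcal{G}_0$; hence $\mathcal{G}_0$ is $d$-regular with $d=\lambda/2\geq 1$. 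Comparing edge counts, $dN/2=\lvert\mathcal{E}(\mathcal{G}_0)\rvert\leq\lvert\mathcal{E}(\mathcal{T})\rvert=N-1$, so $d<2$ and therefore $d=1$. A $1$-regular spanning subgraph of $\mathcal{T}$ is precisely a perfect matching, so $\mathcal{T}$ has one.

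For the converse, let $M$ be a perfect matching of $\mathcal{T}$ and write $m(i)$ for the partner of $i$. I will exhibit a bivalent eigenvector for $\lambda=2$. Root $\mathcal{T}$ at an arbitrary vertex, set its value to $+1$, and define $v$ along the unique path from the root to each vertex by flipping sign exactly at the edges of $M$; equivalently, $v$ is the $\{-1,+1\}$ labelling for which every matching edge joins opposite values and every non-matching edge of $\mathcal{T}$ joins equal values. This labelling is well defined precisely because $\mathcal{T}$ is acyclic, so the unique root-to-vertex path makes the rule consistent. It then remains only to verify~(\ref{eigenvector}): a vertex $i$ of degree $d_i$ has exactly one matched neighbour $m(i)$ (contributing $v_{m(i)}=-v_i$) and $d_i-1$ non-matched neighbours (each contributing $v_i$), so $\sum_{j\sim i}v_j=(d_i-2)v_i=(d_i-\lambda)v_i$ with $\lambda=2$. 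Hence $v$ is bivalent and $\mathcal{T}$ is a bivalent tree. Equivalently, $v$ already affords $\lambda=2$ on the $1$-regular graph $M$, and $\mathcal{T}$ is obtained from $M$ by adding edges between equal-valued vertices, so Theorem~\ref{L2S} finishes the argument.

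The only delicate points, rather than genuine obstacles, are two: in the direct implication the edge count must be applied to the \emph{reduced} graph $\mathcal{G}_0$, and one must note $d\geq 1$ so that $d=1$ is forced; and in the converse one must justify that the sign pattern can be chosen consistently, which is exactly where being a tree is used and not merely being a bipartite graph with a perfect matching. Both are handled by the remarks above, so I expect the write-up to be short.
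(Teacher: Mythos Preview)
Your proof is correct and follows essentially the same approach as the paper: reduce to a $d$-regular bipartite spanning subgraph via the bivalent-graphs characterisation and force $d=1$, then for the converse build the $\pm1$ labelling by a root-to-leaf traversal flipping at matching edges. The only (minor) difference is the way you pin down $d=1$: you use the edge count $dN/2\le N-1$, whereas the paper observes that a tree has a leaf, whose degree in the reduced subgraph is at most~$1$.
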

\begin{proof}
First note that a tree is bipartite 
and that a $1$-regular graph is a perfect matching.

Assume $\mathcal{T}$ be a bivalent tree. 
Then there exists an eigenvector $v$ with entries solely in $\{1,-1\}$
built from a $d$-regular bipartite graph by adding edges 
between nodes of equal values.
Since a tree always has leaves (nodes of degree 1), $d$ must be equal to $1$, 
the subgraph is $1$-regular hence a perfect matching.

Conversely, if a tree has a perfect matching,
it is easy to construct a bivalent eigenvector by taking opposite values in each 
edge of the matching, as there are no cycles in a tree, this can be done 
by Breadth-First Search (BFS) or Depth-First Search (DFS) algorithms.

\end{proof}

For a general graph, the existence of a perfect matching is not a sufficient condition to be bivalent. 
As examples, we show in Fig.\ref{asym} two asymmetric graphs i.e. which have no symmetries.

\begin{figure}[H]
\centerline{\resizebox{8 cm}{3 cm}{\includegraphics{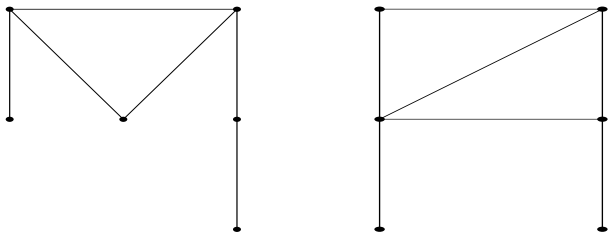}}}
\caption{\label{asym} Two asymmetric graphs of 6 nodes. They have a perfect matching but are not bivalent.}
\end{figure}

\section{Trivalent graphs}

The following theorem gives a characterization of trivalent graphs.
As noticed in the proof, soft regular graphs are trivalent.
In this section, we give examples of trivalent graphs obtained by 
the transformations of the theorem and also the transformation
of Theorem \ref{alt} (add/delete an alternate perfect matching).

\begin{theorem}[\textbf{Trivalent graphs}]
\label{trivalent}
Trivalent graphs are obtained from soft regular tripartite graphs
by applying to the same trivalent eigenvector the transformations :
\begin{itemize}
\item add a link between two equal nodes, 
\item extension by soft nodes
\item replace an edge by a soft square.
\end{itemize}
\end{theorem}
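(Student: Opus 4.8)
The plan is to mirror the structure of the bivalent proof: first reduce a trivalent graph by stripping away the edges and soft-node structure that the four eigenvalue-preserving transformations are allowed to add, show that what remains is a soft regular tripartite graph, and then argue the converse — that every such reduced graph genuinely carries a trivalent eigenvector, and that applying the listed transformations keeps it trivalent. Concretely, let $\mathcal{G}$ have a trivalent eigenvector $v$ affording $\lambda$, with vertex set split into $V_+,V_0,V_-$ according to $v_i\in\{+1,0,-1\}$.

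First I would normalize the non-soft part. By Theorem~\ref{L2S} I may delete every edge joining two equal non-soft nodes (both $+1$ or both $-1$), since such deletions preserve $v$ and $\lambda$; the inverse of this operation is exactly ``add a link between two equal nodes.'' Note that edges within $V_0$ also join equal nodes, so they too can be deleted. After this, the only edges incident to non-soft nodes run between $V_+$ and $V_-$, or between a non-soft node and $V_0$. Next I would peel off the soft-node structure: using Theorem~\ref{extension} in reverse, remove soft nodes that can be shed without disturbing the eigenvector equation at the non-soft nodes — i.e.\ realize the current graph as an ``extension by soft nodes'' of a smaller graph on the non-soft nodes plus whatever soft nodes are forced to stay. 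Similarly, any $4$-cycle $i$–$k$–$j$–$l$ with $k,l\in V_0$ and $v_i=-v_j$ is, by Theorem~\ref{square}, the image of a single edge $e_{ij}$, so these ``soft squares'' get collapsed. The claim to establish is that once all three reductions are carried out maximally, the resulting graph $\mathcal{G}_0$ has \emph{no soft nodes at all}: its vertex set is $V_+\cup V_-$, all edges go between $V_+$ and $V_-$ (so it is bipartite, hence certainly tripartite), and writing the eigenvector equation at a $+1$ node of degree $d_j$ gives $d_j = \lambda$ exactly as in \eqref{biv}. Hence every non-soft node has the same degree $\lambda$, so $\mathcal{G}_0$ is $\lambda$-soft regular (indeed $\lambda$-regular). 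If some soft nodes are genuinely irreducible, one keeps them but shows the degrees of the non-soft nodes still agree via \eqref{eigenvector}, giving soft regularity; the ambient graph $\mathcal{G}_0$ is then tripartite with parts $V_+,V_-$ and the residual $V_0$.

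The converse is the easy direction and follows from the three cited/earlier theorems: a soft regular tripartite graph with the natural $\pm1$ labelling on two of its parts and $0$ on the third satisfies \eqref{eigenvector} (the soft regularity is exactly what makes $d_j=\lambda$ consistent), so it is trivalent, and Theorems~\ref{L2S}, \ref{extension}, and~\ref{square} each preserve trivalence because each preserves the eigenvector and introduces only components equal to existing ones or new zeros.

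The main obstacle I expect is the reduction step — proving that the three eigenvalue-preserving transformations, run backwards, really do strip $\mathcal{G}$ down to something \emph{tripartite} and \emph{soft regular}, rather than merely simpler. In particular one must check that collapsing soft squares and removing extendable soft nodes cannot interfere with one another (the order of reductions) and that, after deleting equal-node edges, there is no obstruction to two-coloring the non-soft nodes — i.e.\ that no edge survives between two $+1$'s by necessity. Here the eigenvector equation \eqref{biv} does the work: any surviving $+1$–$+1$ edge would have been deleted, so the non-soft subgraph is forced bipartite, and then soft-regularity drops out of the degree count. Making the ``maximally reduced'' notion precise, and confirming it terminates in a soft regular tripartite graph, is the crux.
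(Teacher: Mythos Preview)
Your reduction to a tripartite graph by deleting equal-node edges and stripping soft nodes not adjacent to non-soft nodes matches the paper exactly. The divergence, and the gap, is in what you do next.

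You propose to continue \emph{removing} soft structure: collapse soft squares back to edges and peel off further soft nodes, hoping to terminate at a graph with no soft nodes (hence bipartite and regular by the bivalent argument), or at worst at something soft regular. But after the tripartite reduction the eigenvector equation at a non-soft node $j$ reads $\lambda = d_j + \tilde{d}_j$, where $\tilde{d}_j$ counts opposite-sign neighbours. Nothing forces $\tilde{d}_j$ (equivalently $d_j$) to be constant across non-soft nodes, and a soft node adjacent to a non-soft node need not sit in any collapsible soft square; so your backward reductions can stall at a tripartite graph that is \emph{not} soft regular. Your fallback sentence (``one keeps them but shows the degrees of the non-soft nodes still agree via \eqref{eigenvector}'') is precisely the unproved claim: \eqref{eigenvector} only gives $d_j+\tilde d_j=\lambda$, not $d_j=\text{const}$.

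The paper's move is the opposite of yours and is what makes the argument close. After reaching the tripartite stage, it applies the soft-square transformation \emph{forward}: replace \emph{every} edge between a $+1$ node and a $-1$ node by a soft square. This drives $\tilde{d}_j$ to $0$ for every non-soft $j$, so the eigenvector equation forces $d_j=\lambda$ uniformly, i.e.\ the resulting (still tripartite) graph is $\lambda$-soft regular. The original $\mathcal{G}$ is then recovered from this soft-regular graph by undoing those soft squares and re-inserting the deleted equal-node edges and soft nodes --- all of which are among the listed transformations (and their inverses). So the missing idea is not a sharper termination argument for your backward reduction, but the realization that one should \emph{add} soft squares to equalize degrees rather than try to eliminate soft nodes.
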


\begin{proof}
Let $\mathcal{G}$ be a graph having a trivalent eigenvector $v$ affording $\lambda$.

We reduce $\mathcal{G}$ by deleting all 
the edges between equal nodes (Theorem \ref{L2S})
and deleting soft nodes that are not adjacent to non-soft nodes 
(Theorem \ref{extension}),
thus obtaining a graph where edges only connect nodes with different values in $\{1,-1,0\}$.
This is a tripartite graph.

 For soft nodes $j$ in the reduced graph, the eigenvector condition
\bee
(d_j)(0)+\sum_{i\sim j,~v_i=1} (-1)(1)+\sum_{i\sim j,~v_i=-1} (-1) (-1) =(\lambda ) (0) =0,
\eee
requires that 
\bee
\mathrm{card}\left\{i\sim j,~v_i=+1\right\}=\mathrm{card}\left\{i\sim j,~v_i=-1\right\}.
\eee

 The eigenvector condition for nodes $j$ such that $v_j=1$, 
\bee
(d_j) (1)+\sum_{i\sim j,~v_i \neq 0} (-1) (-1)+\sum_{i\sim j,~v_i = 0} (-1) (0)=(\lambda ) (1).
\eee
A similar condition holds for nodes $j$ such that $v_j=-1$. 
Thus, 
\be
\label{lambda}
\lambda=d_j+\tilde{d}_j=2d_j-s_j,~~~  \forall j \in \mathcal{S}^c,
\ee
where $\mathcal{S}=\{k,~v_k=0\}$, 
$\mathcal{S}^c=\{1,\dots,N\}\backslash \mathcal{S}$, 
$\tilde{d}_j=\mathrm{card} \left\{i\sim j,~v_i\neq 0 \right\}$
and $s_j=\mathrm{card} \left\{i\sim j,~v_i=0 \right\}$.

The eigenvalue formula (\ref{lambda}) 
is satisfied for $\mathcal{G}$ being soft regular for $v$.
For trivalent graphs $\mathcal{G}$ that are not soft regular
(an example is shown on the left of Fig.\ref{nonSR}),
one can transform $\mathcal{G}$ to a soft regular graph
by applying Theorem \ref{square} several times 
and replacing each edge between nodes of opposite values 
by a square of two soft nodes
(as shown on the right of Fig.\ref{nonSR}), so that all
nodes $j$ in $\mathcal{S}^c$ verify $\tilde{d}_j=0$ and
$d_j=\lambda$.

Conversely, a soft regular tripartite graph $\mathcal{G}$ satisfies 
the eigenvalue condition (\ref{lambda}) and any extension
of $\mathcal{G}$ of the type above is trivalent.

\end{proof}

\begin{figure}[H]
\centerline{\resizebox{12 cm}{6 cm}{\includegraphics{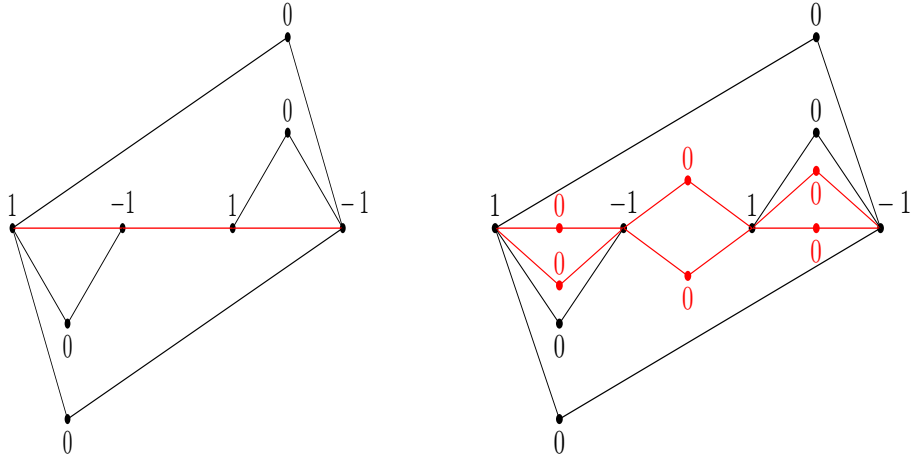}}}
\caption{\label{nonSR}
Two trivalent graphs, 
a non-soft regular graph (left) and a $5$-soft regular graph (right),
affording (the same eigenvalue) $\lambda=5$.}
\end{figure}

Below we give a classification by eigenvalues of the smallest trivalent graphs.
Then, the transformations connecting the elements within each class
generate trivalent graphs.

The smallest trivalent graph having eigenvalue 
$\lambda=d_j+\tilde{d}_j=1$ (where $j$ is a non-soft vertex) 
satisfies $d_j=1,~\tilde{d}_j=0$. 
That is the path on 3 nodes shown in Fig.\ref{lambda1}.

Trivalent trees are constructed from trivalent path on 3 vertices $(1,0,-1)^T$
by adding nodes between two equal-valued vertices Theorem \ref{L2S}
and extension of soft nodes Theorem \ref{extension}.
A characterization of all trees that have $1$ 
as the third smallest Laplacian eigenvalue is presented in \cite{barik}.

\begin{figure}[H]
\centerline{\resizebox{3 cm}{0.8 cm}{\includegraphics{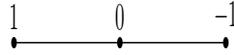}}}
\caption{\label{lambda1} The smallest trivalent graph affording $\lambda=1$.}
\end{figure}

The smallest trivalent graphs having eigenvalue 
$\lambda=d_j+\tilde{d}_j=2$ (where $j$ is a non-soft vertex)
satisfy :
\begin{itemize}
\item $d_j=2,~\tilde{d}_j=0$. That is the 4-cycle shown on the left of Fig.\ref{lambda2},
\item $d_j=\tilde{d}_j=1$. That is the 1-regular bivalent graph.
\end{itemize}

\begin{figure}[H]
\centerline{\resizebox{8 cm}{3 cm}{\includegraphics{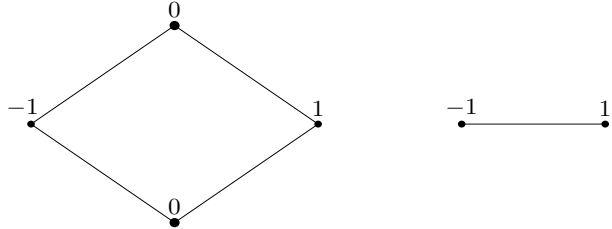}}}
\caption{\label{lambda2} The smallest trivalent graphs affording $\lambda=2$.}
\end{figure}

The smallest trivalent graphs having eigenvalue 
$\lambda=d_j+\tilde{d}_j=3$ (where $j$ is a non-soft vertex)
satisfy :
\begin{itemize}
\item $d_j=3,~\tilde{d}_j=0$. That is the graph shown on the left of Fig.\ref{lambda3},
\item $d_j=2,~\tilde{d}_j=1$. That is the graph shown on the right of Fig.\ref{lambda3}.
\end{itemize}

\begin{figure}[H]
\centerline{\resizebox{8 cm}{4 cm}{\includegraphics{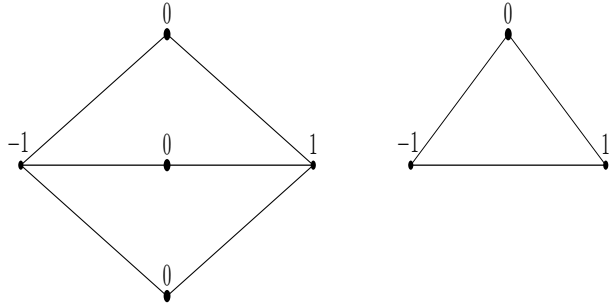}}}
\caption{\label{lambda3} The smallest trivalent graphs affording $\lambda=3$.}
\end{figure}

The smallest trivalent graphs having eigenvalue 
$\lambda=d_j+\tilde{d}_j=4$ (where $j$ is a non-soft vertex)
satisfy :
\begin{itemize}
\item $d_j=4,~\tilde{d}_j=0$. That is the graph shown on the left of Fig.\ref{lambda4},
\item $d_j=3,~\tilde{d}_j=1$. That is the graph shown on the middle of Fig.\ref{lambda4},
\item $d_j=\tilde{d}_j=2$. That is the 2-regular bivalent graph (right of Fig.\ref{lambda4}).
\end{itemize}

\begin{figure}[H]
\centerline{\resizebox{12 cm}{4 cm}{\includegraphics{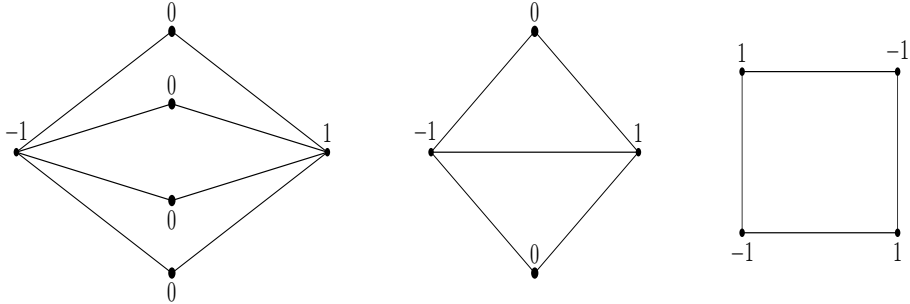}}}
\caption{\label{lambda4} The smallest trivalent graphs affording $\lambda=4$.}
\end{figure}

The smallest trivalent graphs having eigenvalue 
$\lambda=d_j+\tilde{d}_j=5$ (where $j$ is a non-soft vertex)
satisfy :
\begin{itemize}
\item $d_j=5,~\tilde{d}_j=0$. That is the graph shown on the left of Fig.\ref{lambda5},
\item $d_j=4,~\tilde{d}_j=1$. That is the graph shown on the middle of Fig.\ref{lambda5},
\item $d_j=3,~\tilde{d}_j=2$. That is the graph shown on the right of Fig.\ref{lambda5}.
\end{itemize}

\begin{figure}[H]
\centerline{\resizebox{13 cm}{5 cm}{\includegraphics{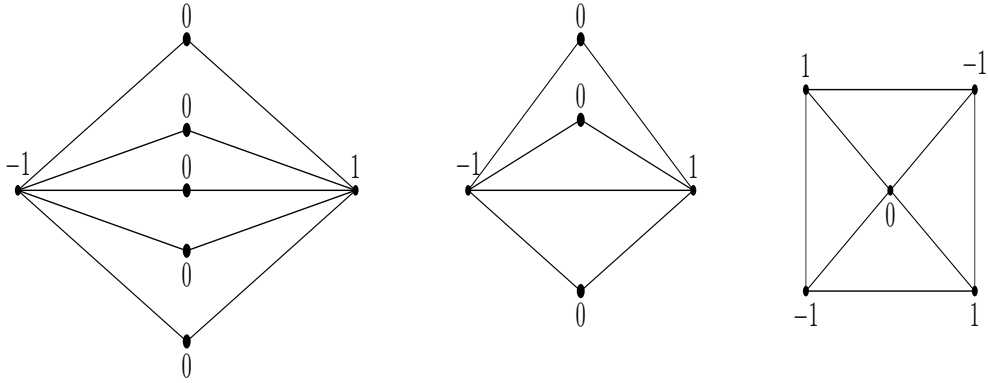}}}
\caption{\label{lambda5} The smallest trivalent graphs affording $\lambda=5$.}
\end{figure}

The smallest trivalent graphs having eigenvalue 
$\lambda=d_j+\tilde{d}_j=6$ (where $j$ is a non-soft vertex)
satisfy :
\begin{itemize}
\item $d_j=6,~\tilde{d}_j=0$. That is the first graph in Fig.\ref{lambda6},
\item $d_j=5,~\tilde{d}_j=1$. That is the second graph in Fig.\ref{lambda6},
\item $d_j=4,~\tilde{d}_j=2$. That is the third graph in Fig.\ref{lambda6},
\item $d_j=\tilde{d}_j=3$. That is the 3-regular bivalent graph (right of Fig.\ref{lambda6}).
\end{itemize}

\begin{figure}[H]
\centerline{\resizebox{14 cm}{6 cm}{\includegraphics{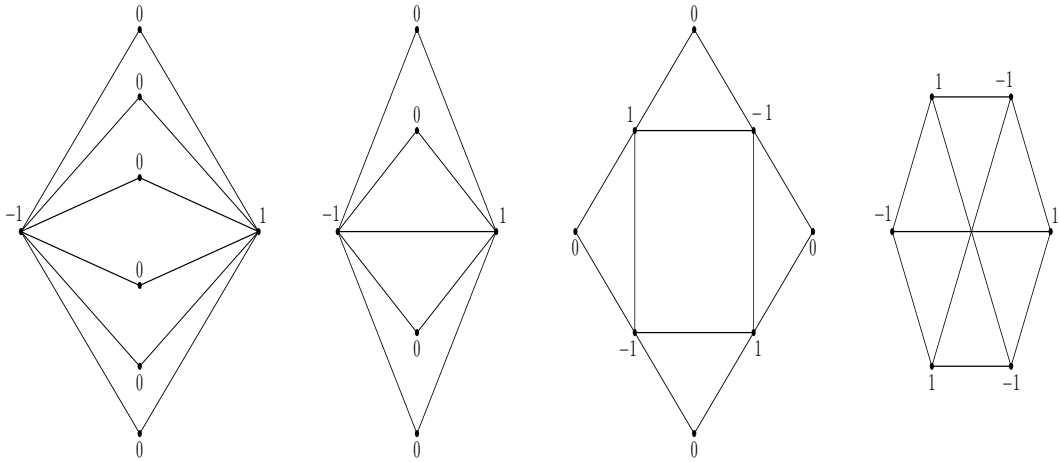}}}
\caption{\label{lambda6} The smallest trivalent graphs affording $\lambda=6$.}
\end{figure}

\section{Conclusion}
We have characterized bivalent and trivalent graphs by applying
Laplacian eigenvector transformations;  these
are links between two equal nodes, replacing an edge by a soft square,
and adding or deleting an alternate perfect matching.
We show that bivalent graphs are the regular bipartite graphs and 
their extensions obtained by
adding edges between two equal nodes.
We define a soft regular graph 
as having a Laplacian eigenvector with soft nodes
such that each non-soft node has the same degree.
Trivalent graphs are shown to be the soft regular graphs and their extensions.
However, the question of whether a given graph is bivalent or trivalent, 
is difficult and remains open.
The exploration of these graphs is important 
for nonlinear dynamical systems on networks.

\section*{Acknowledgment}
This work is part of the XTerM project, co-financed by the European Union 
with the European regional development fund (ERDF) and by the Normandie Regional Council.

\end{document}